\newcommand{\las}{\lambda_{\,\mathrm{s}}}
\newcommand{\laf}{\lambda_{\,\mathrm{f}}}
\newcommand{\pa}{\partial}
\newcommand{\R}{\mathrm{I\! R}}
\newcommand{\f}{\mathfrak{f}}
\begin{document}

\title*{Explicit construction of effective flux functions for Riemann solutions}
\author{Pablo Casta\~neda}
\institute{Pablo Casta\~neda
\at Department of Mathematics, ITAM \\
R\a'io Hondo 1, Ciudad de M\'exico 01080, Mexico. \\
\email{pablo.castaneda@itam.mx}
}

\maketitle

\abstract{For a family of Riemann problems for systems of conservation laws, we construct a flux function that is scalar and is capable of describing the Riemann solution of the original system.}

\section{Introduction}

\noindent
We are interested in injection problems leading to flow in porous media, which are modeled by systems of conservation laws; a survey of the mathematical theory for such flow may be found in \cite{Aze14,Hans,Schecter} and references therein. In this work we focus on the Riemann problems and their solutions via the Wave Curve Method ({\it cf.} \cite{Aze10,CAFM16}) and on the construction of effective flux functions (EFF) allowing to understand the whole system as a single scalar conservation law, see \cite{CAFM16,Oleinik57}.

The setting for such a construction is given for a fixed state in physical space. Thus we can develop the construction of a wave group the {\it lifting} of which will give an effective flux function. Such functions can be treated as the flux functions of a scalar conservation law in a certain parametrized coordinate. This lifting is the crucial part in the construction. However, we will show that this function is not unique. We only have uniqueness as a class of functions, the representation of which will be the effective flux function for each state, each starting eigenvalue family and, the chosen coordinate system.

Analogous effective flux functions have been used satisfactorily in many works. There are implicit uses in \cite{Aze10,Aze14,BC16,RM12} and, explicit constructions in \cite{CAFM16,preprint}. Another potential application is modeling special flux functions in experimental data for which classical known models are inappropriate ({\it e.g.} \cite{GetA15} {\it vs.} \cite{RAA01}).

\section{The 2$\times$2 system of conservation laws}\label{sec:system}
Let us write as a system the conservation laws used along this work. We focus in a system with two equations, since it is only important to take more than one equation. The extension to any number of equations would be natural.

Let $u_1(x, t)$, $u_2(x, t)$ to be the conserved quantities at distance $x$ along the real axis, at time $t$. Typically in one spatial dimension a set of equations governing the system
\begin{equation}
\frac{\partial U}{\partial t} + \frac{\partial F}{\partial x} = 0,
\quad \mbox{ or } \quad
\begin{array}{r}
{\partial u_1}/{\partial t} + {\partial f_1}/{\partial x} = 0 \\
{\partial u_2}/{\partial t} + {\partial f_2}/{\partial x} = 0,
\end{array}
\label{eq:conservation}
\end{equation}
for $x \in \R$, $t \geq 0$, representing the conservation of $U = (u_1,\,u_2)$. The flow functions characterize the system and are denoted as the vector $F(U) = (f_1(u_1,\,u_2),\,f_2(u_1,\,u_2))$.
We denote as $\mathcal{D}$ the space of states $U$, in general we consider $\mathcal{D} \subset \R^2$.

Of special interest in applications and numerical calculations consists in the classification of the solution structure of the system of PDE \eqref{eq:conservation} with discontinuous data:
\begin{align} \label{eq:Riemanndata}
    U(x,\,t=0) = \begin{cases}
        U^L \quad \text{if} \quad x < 0, \\
        U^R \quad \text{if} \quad x > 0 .
    \end{cases}
\end{align}
The Riemann problem consists of system \eqref{eq:conservation} with initial Riemann data \eqref{eq:Riemanndata}, which we will denote as $\mathrm{RP}(U^L,\,U^R)$, with left and right values $U^L$ and $U^R$, respectively.

Strictly hyperbolic systems of conservation laws, {\it i.e.}, where the eigenvalues of the Jacobian matrix of the flux function are real and distinct, provide a relatively well understood framework for the solution of Riemann problems~\cite{Daf00}. Here we assume that the system is not necessarily strictly hyperbolic.

A remarkable observation is that in many problems for flow in porous media, there is an extra conserved quantity or fluid, thus an extra equation for system \eqref{eq:conservation}. Typically the extra quantity and the phases in \eqref{eq:conservation} add up to one. In the same way, there is an extra flux function that depends on these phases, see for example \cite{Hans,CAFM16,RM12} where $u_1$, $u_2$, and $u_3$ can represent the saturation of water, gas and oil. Here the state space is the saturation triangle defined by $U$ satisfying $0 \leq u_1,\,u_2$, $u_1 + u_2 \leq 1$ and the constraint $u_1 + u_2 + u_3 = 1$. For this model an extra function satisfies $f_3 = 1 - f_1 - f_2$ that gives a third redundant equation. This is an important fact since the parametrization of the EFF can be given in any of those coordinates.

\subsection{Terse review of Fundamental Waves}
Equations~\eqref{eq:conservation} have solutions that propagate as nonlinear waves.
Because of {\it self-similarity} of the data and the PDE, the solutions of a Riemann problem depend on $x/t$
and consist of centered rarefaction waves, shock waves and sectors of constant states,
see {\it e.g.} \cite{Lax57,Liu74,Oleinik57}.
The characteristic speeds are the two eigenvalues of the Jacobian matrix
\[ \mathrm{J}(S) \;:=\; \frac{\pa (f_1(U),\,f_2(U))}{\pa (u_1,\,u_2)} \;=\; \frac{\pa F(U)}{\pa U}. \]
When the eigenvalues are distinct and real we say that the system is strictly hyperbolic. Sometimes, it loses hyperbolicity at particular states, as the cases registered in \cite{BC--,MCM12,RM12} for umbilic and quasi-umbilic points.
For distinct eigenvalues, the smaller and larger are called the slow- and the fast-family characteristic speed.
Actually, these eigenvalues can be equal on a curves or on larger sets, see \cite{SM14}.

System~\eqref{eq:conservation} has smooth solutions called (slow- and fast-family) rarefaction waves.
They arise by solving an ODE, namely,
\begin{equation}\label{eq:rar}
{dU}/{d\xi} = \vec{r}_k(U),
\end{equation}
$k = s$ or $f$ ({slow or fast}), defined by the eigenvectors of the Jacobian matrix $\mathrm{J}(U)$, with
\begin{equation}\label{eq:eigenproblem}
\{\mathrm{J}(U) - \xi\,\mathrm{I}\} \vec{r}_k(U) = 0,
\end{equation}
where $U(\xi)$, for $\xi = x/t$, is the profile of the {\it forward} rarefaction, provided $\xi$ is monotone increasing; it is called {\it backward} for $\xi$ monotone decreasing. The $k$-integral curve of a state $U^o$, denoted by $\mathcal{R}_k(U^o)$, consists of all states $U$ for each of which $U(\xi)$ solves the initial value problem \eqref{eq:rar} for the given $k$ and initial condition $U^o$, either backward and forward.

This system also admits solutions in the form of moving jump discontinuities. In order to respect conservation of $u_1$ and $u_2$, the fluxes in and out of the moving discontinuity must balance.
In terms of the state $U^o = (u^o_1,\,u^o_2)$ on the left of the discontinuity, the state $U = (u_1,\,u_2)$
on the right of the discontinuity, and the propagation speed $\sigma$, this balance is expressed as $F(U) - \sigma U =  F(U^o) - \sigma U^o$, or
\begin{equation}
\label{rel:RH}
\begin{array}{rcl}
f_1(U) - \sigma u_1 &=& f_1(U^o) - \sigma u^o_1, \\
f_2(U) - \sigma u_2 &=& f_2(U^o) - \sigma u^o_2.
\end{array}
\end{equation}
Eqs.~\eqref{rel:RH} are the Rankine-Hugoniot (RH) conditions. The Rankine-Hugoniot locus of a state $U^o$, denoted by $\mathcal{H}(U^o)$, consists of all states $U$ for each of which
there exists a value $\sigma = \sigma(U^o,\,U)$ such that the RH conditions~\eqref{rel:RH} are satisfied.

The two former loci in the space of states will be the basis for the construction of the wave group and for the effective flux function. We recall that in practice we must select a criterion for the ``physically admissible'' discontinuities that appear in the solutions. This will be Liu's criterion including also the Lax's admissibility criteria, namely:

\smallskip
\smallskip
\noindent
{\it Shock admissibility.} A discontinuity with propagation speed $\sigma = \sigma(U^L,\,U^R)$ between a left state $U^L$ and a right state $U^R$ is admissible if it satisfies Liu's admissibility criterion \cite{Liu75}, whenever it is applicable. We use Lax's admissibility criterion \cite{Lax57} in order to classify the propagation speed as follows:
\begin{equation}
\label{rel:Lax}
\begin{array}{rl}
\las(U^R) < \sigma < \las(U^L), \;&\; \sigma < \laf(U^R), \quad
\mbox{for slow-family shocks}, \\
\laf(U^R) < \sigma < \laf(U^L), \;&\; \las(U^L) < \sigma, \quad 
\mbox{for fast-family shocks}.
\end{array}
\end{equation}

\smallskip
\noindent
Moreover, in the previous definitions we allow one of the inequalities to become an equality;
hence our admissibility criterion. The nomenclature slow- and fast-family originates from the 1-Lax and 2-Lax shock waves, \cite{Lax57}.

The following definition is inspired by the Welge-Ole\u{\i}nik's construction for a single conservation equation. Let $U^o$ be a state in physical space;
a state $U$ is a {\it slow} (respectively {\it fast}) {\it extension} of $U^o$ if $U$ belongs to $\mathcal{H}(U^o)$ and the shock speed $\sigma(U^o,\,U)$ equals the
slow characteristic speed $\las(U)$ (resp. fast $\laf(U)$), {\it i.e.}, the shock is characteristic at $U$.

The Bethe-Wendroff Theorem ({\it cf.} \cite{Fur89}) guarantees that at an extension point one of the rarefactions curves starting at $U$ is tangent to the $\mathcal{H}(U^o)$ at $U$.

\subsection{The Wave Curve Method}
\label{sec:MoC}

Solutions found by Wave Curve Method consist of rarefaction fans, shock discontinuities and constant states, a survey is found in \cite{Aze10}. The classical construction is guaranteed to succeed only when $U^L$ and $U^R$ are close.
The Buckley-Leverett (BL) solution exhibits inflection points ({\it cf.} \cite{BL42, Oleinik57}), where equalities such as $\nabla\lambda_k(U)\cdot r_k(U) = 0$ for $k = s$ or $f$ occur. The BL shows that rarefaction waves and shock waves of the same family can be adjacent. For adjacency to occur, the shock speed must coincide with the same-family characteristic speed at an edge of the rarefaction wave, and one of the Lax's inequalities in \eqref{rel:Lax} becomes an equality.

Of course, when traversing the solution by increasing $x/t$ monotonically, the corresponding wave speed must also increase. Consequently, fast-family waves follow slow-family waves. This structural feature was first identified by Liu, \cite{Liu74}, under technical restrictions and holds in general; see \cite{Schecter}. Wave sequences are concatenated following certain rules.

\begin{figure}[htb]
\resizebox{5.32cm}{!}{\includegraphics{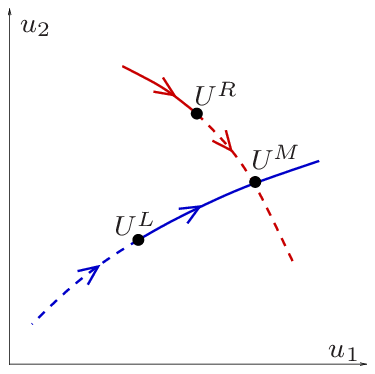}}
\hspace{0.9cm}
\resizebox{5.32cm}{!}{\includegraphics{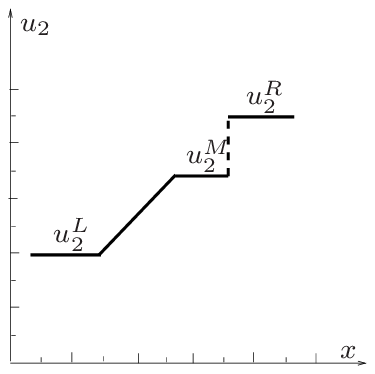}}
\caption{Solution of RP$(U^L,\,U^R)$, $U^L$ determines the slow-family wave curve and $U^R$ the backward fast-family wave curve. Their intersection determines $U^M$. At the right figure, the solution profile at a fixed time, notice the value of both coordinate system at $u_2$.}
\label{fig:wcm}
\end{figure}

A wave group is a sequence of waves, all associated to the same family, which are adjacent, meaning that no two waves are separated by a constant state. As in the Lax construction (Fig.~\ref{fig:wcm}), if the beginning or end state of a wave group is prescribed, then the state on the opposite end of the wave group lies on a curve in state space called a wave curve. In the language of wave groups, the Riemann solution consists of the following wave sequence, from left to right: the left state $U^L$, a slow-family wave group, an intermediate state $U^M$, a fast-family wave group, and the right state $U^R$. Notice that the slow-family wave curve from $U^L$ consists of all states $U^M$ attainable through a slow-family wave group; similarly, the backward fast-family wave curve from $U^R$ consists of all states $U^M$ attainable through a fast-family wave group. 
Wave curves and an algorithm to construct the Riemann solution were described in \cite{Liu74}.

In constructing wave groups with two or more waves, in \cite{Liu74} Liu introduced a shock wave
admissibility criterion that encompasses the criterion of Ole\u{\i}nik, \cite{Oleinik57}. The latter is a generalization of Welge's construction, the one we use for constructing the wave group,
hence we call Welge point to values where a shock wave is characteristic for a scalar flux function.
The wave curve method is applicable to systems with any number of conservation laws.
The effective flux function is given for each wave group as we show soon.

\section{The effective flux function construction}
The main idea is the following: for any given fixed state $R$ in $\mathcal{D}$ construct a {\it base curve} $\Gamma: I \rightarrow \mathcal{D}$ and an {\it Effective Flux Function} (EFF) $\f:I \rightarrow \R$. The base curve $\Gamma$ is a parametrization of a wave group; for the $2 \times 2$ systems we have at least four ways to starting such a curve.
The EFF $\f(\ell)$ is the {\it lifting} of the base curve defined by the wave group on the physical space.

The heart of our work resides in two facts: (1) when the state $\Gamma(\ell)$ is at a shock curve, the RH condition \eqref{rel:RH} is satisfied, thus the shock speed can be given by any of those relations or a linear combination of them, (2) when the state $\Gamma(\ell)$ is at an integral curve, the wave speed is given as $\lambda_k\big(\Gamma(\ell)\big)$ with $k = s$ or $f$, the corresponding family of such a rarefaction.

Now let us explain the construction of the base curve and how its lifting is found. As pointed out, we subdivide these constructions into two cases. This is done only for an easier exposition because it always works in the same manner.

\subsection{Construction of the base curve $\boldsymbol{\Gamma(\ell)}$}
\label{sec:base}

In Sec.~\ref{sec:MoC} it is explained how to construct slow- and fast-family wave groups with integral curves and Hugoniot loci on the state space. For a hyperbolic point, {\it i.e.} where both characteristic speeds are real and distinct, there are two linearly independent eigenvectors that describe the tangents to the slow and fast wave groups.

Typically, for each family the respective eigenvalue increases in the direction of an eigenvector and decreases in the opposite direction. Therefore, for a forward wave group construction, we have a rarefaction wave in one direction and a shock wave in the opposite one. There are cases where both directions have shock or rarefactions as initial waves, such points are in the inflection manifold of the respective family.

The base curve $\Gamma: I \rightarrow \mathcal{D}$ will be a parametrization of one wave group. The choice of the interval of interest $I$ and its parametrization is essential in the construction of the EFF $\f(\ell)$, its {\it lifting}, the construction of which will be given soon.

First of all, we need some guiding notation for the analysis that follows. As $\Gamma(\ell)$ belongs to $\mathcal{D}$ we take the coordinates as $\Gamma(\ell) = (\gamma_1(\ell),\,\gamma_2(\ell))$. Thus for $U \in \Gamma(\ell)$ we have $u_i = \gamma_i(\ell)$ for $i = 1,\,2$. Actually, if one of the coordinates $\gamma_1$ or $\gamma_2$ is monotonic, say $\gamma_1$, then it is possible to do a reparametrization $\ell = \gamma_1(\ell)$. Thus we can assume that $\Gamma(\ell) = (\ell,\,\gamma_2(\ell))$ holds at least locally. (In \cite{CAFM16} the parametrization is given with $\ell$ as the oil saturation, the third implicit coordinate, and it is easy to see that it is always possible to take $\ell$ as a linear combination of the coordinates $\gamma_1$ and $\gamma_2$; we assume that $\gamma_1(\ell) = \ell$ holds for an easier exposition.)

In the following sections the {lifting} construction of the EFF $\f(\ell)$ is described. Nonetheless, it is important to remark that such a lifting has as motivation to be a function that behaves as a single scalar flux function. Therefore, the Ole\u{\i}nik E-criterion is applicable ({\it cf.} \cite{Oleinik57}), moreover as such a construction is based on the envelope of the flux function, the ``mirror effect'' given in \cite{Cas16} follows.

\subsection{EFF construction: the first wave is a shock wave}
\label{sec:shockEFF}

Assume that the base curve $\Gamma(\ell)$ starts with a $k$-Lax shock wave at the reference state $R = (u^R_1,\,u^R_2)$. As pointed out in Sec.~\ref{sec:MoC} and in Fig.~\ref{fig:wcm}, the forward wave group is relevant for $k = s$ and the backward wave group for $k = f$. In the forward construction we take the part of $\mathcal{H}(R)$ that satisfies \eqref{rel:Lax} for the chosen $k$ and such that Liu's criterion holds for all points between $R$ and $U$, respectively as left and right states, see \cite{Liu74,Liu75}. (For backward construction, $U$ and $R$ are the left and right states.) Thus for each $U = (u_1,\,u_2)$ in $\mathcal{H}(R)$ there exists $\gamma_2(\ell)$ such that $(\ell,\,\gamma_2(\ell)) = U$ holds; see Sec.~\ref{sec:base}. Notice that the set of admissible shocks may stop at a Bethe-Wendroff point $U^*$ where $\sigma(R,\,U^*) = \lambda_{k'}(U^*)$ occurs. (Actually $k' \neq k$ may hold.)

We set the interval $I = [u^R_1,\,u^*_1]$ assuming that $u^R_1 < u^*_1$. (Conversely, $I = [u^*_1,\,u^R_1]$ for $u^*_1 < u^R_1$.) Now we have the base curve $\Gamma: I \rightarrow \mathcal{D}$ satisfying $\Gamma(\ell) = (\ell,\,\gamma_2(\ell)) \in \mathcal{H}(R)$ the position of which determines the corresponding shock speed \eqref{rel:RH}. Thus, we construct the {\it lifting} as
\begin{equation}
\label{EFF:shock}
\begin{array}{rcccl}
\f &\; : \;& I    & \;\longrightarrow\; & \R \\
   &       & \ell & \;\longmapsto    \; & \f(\ell) := f_1(R) + \sigma\big(R,\,\Gamma(\ell)\big)[\ell - u_1^R]
\end{array},
\end{equation}
thus, notice that the scalar shock speed satisfies
\begin{equation}\label{eq:restrRH}
\sigma(u^R_1,\,\ell) \;=\; \frac{\f(\ell) - f_1(R)}{\ell - u_1^R} \;=\; \sigma\big(R,\,\Gamma(\ell)\big),\qquad \mbox{for all}\qquad \ell \in I,
\end{equation}
so the Rankine-Hugoniot condition \eqref{rel:RH} holds as desired.

\begin{remark}
When $\ell^*$ is a value of a Welge point of the EFF (always with left state $u^R_1$), then $U^* = \Gamma(\ell^*)$ is a Bethe-Wendroff point of $\mathcal{H}(R)$, thus the eigenvector points parallel to the Hugoniot locus and the wave curve may be followed by a rarefaction curve. (As stated by the Bethe-Wendroff theorem.)
\end{remark}

\begin{lemma}\label{rem:high}
The EFF $\f(\ell)$ in \eqref{EFF:shock} is the first flux function over the base curve, \emph{i.e.}, $\f(\ell) = f_1\big(\Gamma(\ell)\big)$.
\end{lemma}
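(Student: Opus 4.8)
The plan is to read the claim directly off the first Rankine--Hugoniot equation; no machinery beyond \eqref{rel:RH} and the parametrization of Sec.~\ref{sec:base} is needed. By the construction in Sec.~\ref{sec:shockEFF}, every state $\Gamma(\ell)$ of the base curve lies on $\mathcal{H}(R)$, so the RH conditions \eqref{rel:RH} hold with left state $R=(u^R_1,\,u^R_2)$, right state $U=\Gamma(\ell)$, and propagation speed $\sigma=\sigma\big(R,\,\Gamma(\ell)\big)$.

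First I would invoke the reparametrization fixed in Sec.~\ref{sec:base}, namely $\gamma_1(\ell)=\ell$, so that the first coordinate of $\Gamma(\ell)$ is precisely $u_1=\ell$. Substituting this into the first line of \eqref{rel:RH} gives
\begin{equation*}
f_1\big(\Gamma(\ell)\big) - \sigma\,\ell \;=\; f_1(R) - \sigma\,u^R_1 .
\end{equation*}
Solving for $f_1\big(\Gamma(\ell)\big)$ yields $f_1\big(\Gamma(\ell)\big) = f_1(R) + \sigma\big(R,\,\Gamma(\ell)\big)[\ell-u^R_1]$, which is exactly the right-hand side of the defining formula \eqref{EFF:shock}. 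Hence $\f(\ell)=f_1\big(\Gamma(\ell)\big)$, as asserted, and \eqref{eq:restrRH} is recovered along the way.

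There is essentially no obstacle here: the lemma is a consistency statement confirming that the lifting \eqref{EFF:shock} was defined precisely so as to agree with the first flux component along the base curve. The only point that genuinely requires care is the identification $u_1=\ell$; had $\ell$ instead been taken as a nontrivial linear combination of $\gamma_1$ and $\gamma_2$ (as the parenthetical remark in Sec.~\ref{sec:base} allows), I would combine the two equations of \eqref{rel:RH} in the same proportion before rearranging, and the lifting would then reproduce the corresponding linear combination of $f_1$ and $f_2$ rather than $f_1$ alone. Under the standing assumption $\gamma_1(\ell)=\ell$, however, the statement reduces to the one-line algebraic rearrangement above.
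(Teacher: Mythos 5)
Your proof is correct and takes essentially the same route as the paper: both arguments read the identity off the first Rankine--Hugoniot equation in \eqref{rel:RH} with $\gamma_1(\ell)=\ell$, then rearrange to match the defining formula \eqref{EFF:shock}. The only cosmetic difference is that the paper works with the quotient form \eqref{eq:restrRH} while you work with the difference form of the RH relation (which incidentally avoids the division by $\ell-u_1^R$ at $\ell=u_1^R$); your closing remark about linear-combination parametrizations is a harmless extension of the paper's own parenthetical in Sec.~\ref{sec:base}.
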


\begin{proof}
The RHS of \eqref{eq:restrRH} is equivalent to $[f_1\big(\Gamma(\ell)\big) - f_1(R)]/[\ell - u_1^R]$, equating with the middle term in \eqref{eq:restrRH} proves that $\f(\ell) = f_1\big(\Gamma(\ell)\big)$ holds in \eqref{EFF:shock}. \hfill $\square$
\end{proof}

The Bethe-Wendroff point at the state $U^*$ can be taken as a new starting reference point for a rarefaction wave. Just make sure that the starting family $k$ now must be taken as $k'$; they may not be the same.

\subsection{EFF construction: the first wave is a rarefaction fan}
\label{sec:rarEFF}

Assume that the base curve $\Gamma(\ell)$ starts with a $k$-rarefaction curve at the reference state $R = (u^R_1,\,u^R_2)$.
In the forward construction we take the $\mathcal{R}_k(R)$ part which has increasing eigenvalue $\xi = \lambda_k(U)$ for $U \in \mathcal{R}_k(R)$. (In the backward construction we take the decreasing eigenvalue direction.) Thus for each $U = (u_1,\,u_2)$ in $\mathcal{R}_k(R)$ there exists $\gamma_2(\ell)$ such that $(\ell,\,\gamma_2(\ell)) = U$ holds; see Sec.~\ref{sec:base}.

Recall that the rarefaction curve may stop at an inflection point $U^*$ (where $\nabla\lambda_k(U^*) \cdot r_k = 0$ occurs), then we set the interval $I = [u^R_1,\,u^*_1]$ assuming that $u^R_1 < u^*_1$. (Conversely, $I = [u^*_1,\,u^R_1]$ for $u^*_1 < u^R_1$.) Thus, the base curve $\Gamma: I \rightarrow \mathcal{D}$ satisfies $\Gamma(\ell) = (\ell,\,\gamma_2(\ell)) \in \mathcal{R}_k(R)$, the position of which determines the corresponding eigenvalue. Thus, we construct the {\it lifting} as
\begin{equation}
\label{EFF:rar}
\begin{array}{rcccl}
\f &\; : \;& I    & \;\longrightarrow\; & \R \\
   &       & \ell & \;\longmapsto    \; & \f(\ell) := f_1(R) + \int_{u_1^R}^{\ell}\,\lambda_k\big(\Gamma(t)\big)\,dt
\end{array},
\end{equation}
and notice that $\f'(\ell) = \lambda_k(\Gamma(\ell))$ holds as desired.

\begin{lemma}\label{rem:f_inRar}
The EFF $\f(\ell)$ in \eqref{EFF:rar} is the first flux function over the base curve, \emph{i.e.}, $\f(\ell) = f_1\big(\Gamma(\ell)\big)$.
\end{lemma}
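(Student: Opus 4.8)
The plan is to differentiate both sides of the claimed identity and match an endpoint value, in direct parallel with Lemma~\ref{rem:high} but now exploiting the eigenvalue structure of the rarefaction in place of the Rankine--Hugoniot relation. First I would record what the definition \eqref{EFF:rar} already supplies: by the fundamental theorem of calculus $\f'(\ell) = \lambda_k(\Gamma(\ell))$, and at the left endpoint $\f(u_1^R) = f_1(R)$. The goal then reduces to showing that the function $g(\ell) := f_1(\Gamma(\ell))$ obeys the same first-order relation $g'(\ell) = \lambda_k(\Gamma(\ell))$ together with the same value $g(u_1^R) = f_1(\Gamma(u_1^R)) = f_1(R)$; integrating the common derivative from $u_1^R$ to $\ell$ then forces $\f \equiv g$ on $I$.

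The heart of the argument is the computation of $g'(\ell)$. By the chain rule, $\tfrac{d}{d\ell} F(\Gamma(\ell)) = \mathrm{J}(\Gamma(\ell))\,\Gamma'(\ell)$, whose first component is exactly $g'(\ell) = \tfrac{d}{d\ell} f_1(\Gamma(\ell))$. The key geometric input is that $\Gamma$ parametrizes a piece of the integral curve $\mathcal{R}_k(R)$, so its tangent $\Gamma'(\ell)$ is everywhere parallel to the eigenvector $\vec{r}_k(\Gamma(\ell))$; this tangency survives the reparametrization by the first coordinate from Sec.~\ref{sec:base}, since reparametrizing a curve only rescales its velocity vector. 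Writing $\Gamma'(\ell) = c(\ell)\,\vec{r}_k(\Gamma(\ell))$ and invoking the eigenproblem \eqref{eq:eigenproblem} then gives $\mathrm{J}(\Gamma(\ell))\,\Gamma'(\ell) = \lambda_k(\Gamma(\ell))\,\Gamma'(\ell)$. Reading off the first component and using the normalization $\gamma_1(\ell) = \ell$, so that $\gamma_1'(\ell) = 1$, yields $g'(\ell) = \lambda_k(\Gamma(\ell))\,\gamma_1'(\ell) = \lambda_k(\Gamma(\ell))$, which is precisely $\f'(\ell)$.

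Combining the two derivatives with the matching endpoint value completes the proof, since $\f(\ell) - f_1(R) = \int_{u_1^R}^{\ell} \lambda_k(\Gamma(t))\,dt = g(\ell) - f_1(R)$, whence $\f(\ell) = f_1(\Gamma(\ell))$. I expect the only delicate point to be the tangency claim $\Gamma' \parallel \vec{r}_k$ under the chosen parametrization; everything else is the chain rule together with the fundamental theorem of calculus. It is worth noting that the normalization $\gamma_1(\ell)=\ell$ is exactly what collapses the first component of the eigenvector equation to the clean identity $g' = \lambda_k\circ\Gamma$ (it also guarantees $c(\ell)\neq 0$, since the first component of $\Gamma'$ equals $1$), and that the argument is insensitive to whether the rarefaction is traversed forward or backward, the sign being absorbed into $c(\ell)$ and into the orientation of $I$.
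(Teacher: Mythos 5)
Your proposal is correct and follows essentially the same route as the paper's own proof: both establish $\tfrac{d}{d\ell}f_1(\Gamma(\ell)) = \lambda_k(\Gamma(\ell))$ by combining the chain rule with the fact that the tangent $(1,\,\gamma_2'(\ell))$ is an eigenvector of $\mathrm{J}(\Gamma(\ell))$, and then conclude by matching this with $\f'(\ell)$ and the common value $f_1(R)$ at $\ell = u_1^R$. Your added care about the reparametrization preserving tangency and the nonvanishing scalar $c(\ell)$ only makes explicit what the paper leaves implicit.
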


\begin{proof}
Direct differentiation in \eqref{EFF:rar} shows that $\f'(\ell) = \lambda_k(\Gamma(\ell))$ holds, notice also that $\nabla \Gamma(\ell) = (1,\,\gamma_2'(\ell))$ is parallel to $\vec{r}_k(\Gamma(\ell))$. Thus, the identity $\mathrm{J}(\Gamma(\ell)) \nabla \Gamma(\ell) = \lambda_k(\Gamma(\ell))\,\nabla \Gamma(\ell)$ holds, see \eqref{eq:eigenproblem}, the first coordinate indicates that
$$\frac{d}{d\ell}f_1(\Gamma(\ell)) \;=\; \bigg(\frac{\pa f_1}{\pa u_1},\,\frac{\pa f_1}{\pa u_2}\bigg) \cdot (1,\,\gamma_2'(\ell)) \;=\; \lambda_k(\Gamma(\ell))$$
is satisfied. Then, as $\f(\ell)$ and $f_1(\Gamma(\ell))$ solve the same IVP with initial condition $f_1(R)$, the fluxes are the same. \hfill $\square$
\end{proof}

The inflection at the state $U^*$ can be taken as a new starting reference point for a shock wave, just make sure that in \eqref{EFF:shock} the reference state for determining the Hugoniot locus and the shock speed is the original reference state $R$ and not $U^*$.

\subsection{The complete EFF construction}

In previous sections we depicted the construction of a {base curve} $\Gamma(\ell)$, see Sec.~\ref{sec:base}, and two ways for the {lifting} of $\f(\ell)$ based on shock curves (Sec.~\ref{sec:shockEFF}) or on rarefaction curves (Sec.~\ref{sec:rarEFF}). In Sec.~\ref{sec:MoC} we pointed out that a wave curve is a composition of the former waves; it changes types at inflection or Bethe-Wendroff points. In this section we show the construction of a complete EFF which is actually a smooth function.

\smallskip
\smallskip
\noindent
{\it From shock to rarefaction curve.} Once we start a {base curve} with shock waves, this is a curve along $\mathcal{H}(R)$ and  would change to an integral curve within the same wave only at a Bethe-Wendroff point $U^*$. Notice that at such a point the equality $\f(u_1^*) = \lambda_k(U^*)$ holds for certain $k$, therefore the continuity for the derivatives of the {lifting} between both expressions \eqref{EFF:shock} and \eqref{EFF:rar} holds. 

The continuity of the EFF itself holds because of the adding of $f_1(R)$ in both {liftings} \eqref{EFF:shock} and \eqref{EFF:rar}: from Lemma \ref{rem:high} we have that $\f(\ell) = f_1\big(\Gamma(\ell)\big)$ holds in particular at $U^*$, and since from \eqref{EFF:rar} we have that $\f(u_1^*) = f_1(U^*)$ holds, also the EFF continuity.

\smallskip
\smallskip
\noindent
{\it From rarefaction to shock curve.} The continuity at this transition do not seems so natural; the actual value of $\f(\ell)$ in \eqref{EFF:rar} is not know {\it a priori} for a given $\ell$. However as the transition must occur at a point $U^*$ belonging to both $\mathcal{R}_k(R)$ and $\mathcal{H}(R)$, thus the values $\lambda_k(U^*)$ and $\sigma(R,\,U^*)$ agree and from the latter and \eqref{EFF:shock}, we notice that $\f(u_1^*) = f_1(U^*)$ holds; hence the smoothness at such transitions.

\smallskip
\smallskip
For a complete construction of an EFF, we notice that transitions from shock waves to rarefaction waves and vice-versa may occur many times. This is perfectly controlled by our way of doing the {liftings}; recall that from a shock to a rarefaction, the family $k$ may have changed to $k'$ and that from a rarefaction to a shock, the reference point $R$ is the original one. We have proven our main result:

\begin{theorem}
Let $R$ to be a fixed state in $\mathcal{D}$. Construct a wave curve through $R$. Select a coordinate $\ell$, let us say $\ell = u_1$, to parametrize the curve as $\Gamma:I \rightarrow \mathcal{D}$ satisfying $\Gamma(u_i^R) = R$. Therefore, an EFF is the respective flux function along $\Gamma(\ell)$ as
$$\begin{array}{rcccl}
\f &\; : \;& I    & \;\longrightarrow\; & \R \\
   &       & \ell & \;\longmapsto    \; & f_1\big(\Gamma(\ell)\big)
\end{array}.$$
\end{theorem}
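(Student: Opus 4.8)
The plan is to deduce the global identity $\f=f_1\circ\Gamma$ from the two local liftings already established, reading the theorem as the assertion that the piecewise construction of Sections~\ref{sec:shockEFF}--\ref{sec:rarEFF} reassembles into the single well-defined map $\ell\mapsto f_1(\Gamma(\ell))$. The guiding observation is that $f_1\circ\Gamma$ depends only on the base curve and on the first flux component, so once each locally-defined piece of the EFF is shown to equal $f_1\circ\Gamma$ on its own subinterval, the pieces necessarily agree on overlaps and glue into one function that can be nothing but $f_1\circ\Gamma$ on all of $I$.

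First I would split $I$ at the transition states $U^*$ into maximal subintervals carrying a wave of a single type. On a subinterval where the wave is a shock the EFF is \eqref{EFF:shock}, and Lemma~\ref{rem:high} gives $\f(\ell)=f_1(\Gamma(\ell))$; on a subinterval where the wave is a rarefaction the EFF is \eqref{EFF:rar}, and Lemma~\ref{rem:f_inRar} gives the same identity. Hence on every piece the construction returns exactly $f_1(\Gamma(\ell))$, irrespective of which formula was used, so it only remains to check that these pieces join into one smooth function.

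The joining is where the actual work lies. Continuity of the value at a transition point $U^*$ is the content of the two transition paragraphs above: the shock lifting and the rarefaction lifting both evaluate to $f_1(U^*)$ there, the shock side through the additive normalization $f_1(R)$ and the rarefaction side because its integral collapses over a point. One must keep the data straight: at a rarefaction-to-shock passage the locus $\mathcal{H}(R)$ and the speed $\sigma(R,U^*)$ in \eqref{EFF:shock} are referred to the \emph{original} state $R$, and at a shock-to-rarefaction passage the rarefaction may restart in the transverse family $k'$. For smoothness I would argue that the base curve $\Gamma$ is itself $C^1$ across each $U^*$, since the Bethe--Wendroff theorem makes the relevant rarefaction curve tangent to $\mathcal{H}(R)$ at the characteristic transition point; consequently $f_1\circ\Gamma$ is as smooth as $\Gamma$ and $f_1$, and the derivative matching recorded through the speed identity $\sigma(R,U^*)=\lambda_{k'}(U^*)$ (respectively $=\lambda_k(U^*)$) is automatic.

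The main obstacle I anticipate is organizational rather than analytic: tracking the reference state and the active family through arbitrarily many alternations of shock and rarefaction segments, and confirming that the tangency, hence the two-sided derivative agreement, holds at \emph{every} transition so that no corner is introduced. Once each segment is identified with $f_1\circ\Gamma$ itself, the equality $\f=f_1\circ\Gamma$ on $I$ is immediate and the smoothness follows from that of the base curve; no further computation is required.
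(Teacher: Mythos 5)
Your proposal matches the paper's own argument: the paper proves the theorem by citing Lemmas~\ref{rem:high} and \ref{rem:f_inRar} for the shock and rarefaction pieces, with the gluing at transition points (value continuity via the $f_1(R)$ normalization, derivative matching via $\sigma(R,U^*)=\lambda_k(U^*)$, and the bookkeeping of the original reference state $R$ and the possibly changed family $k'$) handled exactly as in your third and fourth paragraphs, which is the content of the paper's ``complete EFF construction'' discussion preceding the theorem. No gaps; this is essentially the same proof.
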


\begin{proof}
See Lemmas \ref{rem:high} and \ref{rem:f_inRar}. \hfill $\square$
\end{proof}

Just recall that such an EFF is constructed based on $R$, thus even for $R' \in \Gamma(\ell)$, the respective EFF may be distinct. In the next section we show some examples.

\section{Some examples and applicability}
In this section we explain two examples. The first one based in the satisfactory use of EFFs in \cite{CAFM16,preprint}, constructed along the so-called separatrix as a crucial wave group for the Riemann solution. The second one shows how choosing the parametrization coordinate is important for understanding an EFF.

\subsection{Simplified quadratic Corey model}
In Enhanced Oil Recovery (EOR) the proposed conservation laws are given by fractional flux functions due to physical features as rock and fluid permeabilities (see {\it e.g.} \cite{Hans,RM12}). Here we take a schematic model with flow functions for \eqref{eq:conservation} given by:
\begin{equation}\label{eq:Corey}
f_1(U) \;=\; \frac{Au_1^2}{Au_1^2 + Bu_2^2 + Cu_3^2},\qquad
f_2(U) \;=\; \frac{Bu_2^2}{Au_1^2 + Bu_2^2 + Cu_3^2},
\end{equation}
where constants $A$, $B$ and $C$ depend on several physical quantities. It was pointed out that $u_1$ and $u_2$ are related to water and gas saturations, the oil saturation is related to $u_3 = 1 - u_1 - u_2$. The flow functions \eqref{eq:Corey} are related to the flux function for water and gas, which came from the so-called quadratic permeability Corey model; an extra implicit flow function for oil
$f_3(U) = {Cu_3^2}/({Au_1^2 + Bu_2^2 + Cu_3^2})$
is sometimes useful.
The domain is the saturation triangle given by the constraints $0 \leq u_1,\, u_2,\, u_3$ and $u_1 + u_2 + u_3 = 1$.

\subsection{Example 1: critical solution along the separatrix}
\label{sec:ex1}
A classical problem in EOR is the Water alternating Gas (WAG) injection, which has a direct relation to the Riemann problem RP$(U^L,\,O)$ where the left Riemann datum $U^L$ represents a mixture of water and gas, and the left Riemann datum $O = (0,\,0)$ represents a virgin reservoir state containing solely oil.

\begin{figure}[ht]
\sidecaption
\resizebox{7.5cm}{!}{\includegraphics{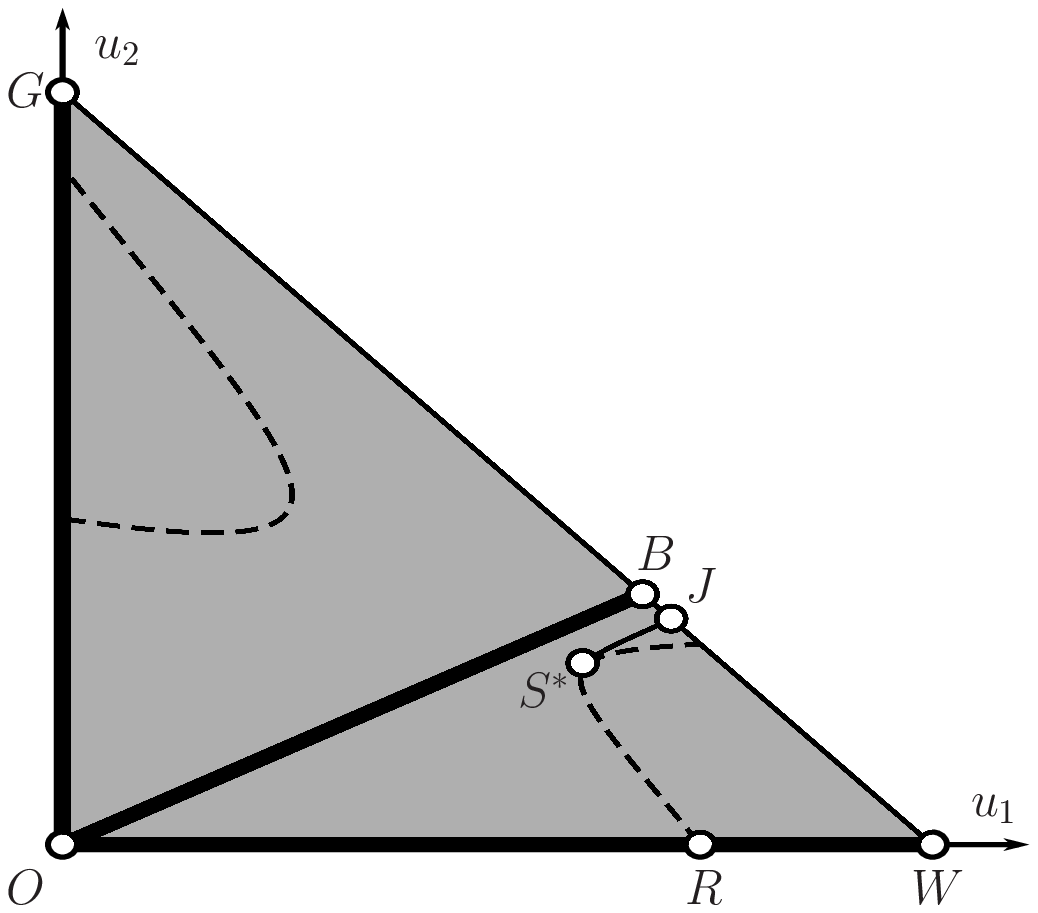}}
\caption{The shadowed region represents the saturation triangle. The vertex $O$, $W$ and $G$ are related to pure oil, water and gas, respectively. The base curves $\Gamma_i$ are the lines connecting $O$ to $W$, $G$ and $B$ respectively for $i = 1,\,2$ and $3$. The dashed curves are $\mathcal{H}(R)$, both branches of a hyperbola. The thin continuous curve is a slow-family rarefaction connecting $J$ to $S^*$.}
\label{fig:triangle}
\end{figure}

In the saturation triangle there are three base wave curves reaching $O$ ({\it cf.} \cite{Aze14}). They are lines which can be parametrized by $\ell \in I$ as the third (implicit) coordinate $u_3$ for the interval $I = [0,\,1]$. Let $\Gamma_1(\ell) = (1 - \ell,\,0)$, $\Gamma_2(\ell) = (0,\,1 - \ell)$ and, $\Gamma_3(\ell) = ((1 - \ell)B/D,\,(1 - \ell)A/D)$ be the base curves for such parametrization with the denominator $D = A + B$, see Fig.~\ref{fig:triangle}. Thus, simple computations lead to the EFFs $\f_i(\ell) = f_3(\Gamma_i(\ell))$ for $i =1,\,2,\,3$ given explicitly by
$$\f_1(\ell) =  \frac{C\ell^2}{A(1 - \ell)^2 + C\ell^2},\;\;
  \f_2(\ell) =  \frac{C\ell^2}{B(1 - \ell)^2 + C\ell^2},\;\;
  \f_3(\ell) =  \frac{C\ell^2}{AB(1 - \ell)^2/D + C\ell^2}.$$

The EFFs above satisfy the BL solution, see \cite{BL42}, with S-shaped flux functions. Their Welge points can be calculated by the values
$$\ell_1^* = 1 - \sqrt{C/(A + C)},\quad \ell_2^* = 1 - \sqrt{C/(B + C)},\quad \ell_3^* = 1 - \sqrt{CD/(AB + CD)},$$
with relative positions satisfying $\ell_3^* < \ell_1^*,\,\ell_2^*$. These inequalities guarantee that the optimal injection mixture for oil production occurs within the separatrix, see \cite{CAFM16,preprint}.

For Welge points, their values $\ell_i^*$ also indicate the locations of Bethe-Wendroff points $U^*_i := \Gamma(\ell_i^*)$. It is possible to verify that $\sigma(U^*_3,\,O) = \lambda_s(U^*_3)$ holds, so the rest of the base curve $\Gamma_3$ is a slow rarefaction. The analog Bethe-Wendroff points $U^*_1,\,U^*_2$, show that along $\Gamma_1,\,\Gamma_2$ the rarefactions are of the fast family.

\subsection{Example 2: choosing a parametrization coordinate}

The complete solution for the WAG injection needs a slow wave group. It can be found in forward direction from any point representing mixture of water and gas. However, intermediate states lie over one of the base curves of Sec.~\ref{sec:ex1}, here we show the construction of EFF over backward slow curves for states $R = (m,\,0)$.

The RH relation \eqref{rel:RH} lead to the RH locus $\mathcal{H}(R)$ for all states $U$ in the saturation triangle satisfying $f_1(U) - f_1(R) = [f_2(U)/u_2](u_1 - m)$, since the shock speed is $f_2(U)/u_2$, see (\ref{rel:RH}.b). A simple manipulation shows that $\mathcal{H}(R)$ is the edge $U = (u_1,\,0)$ and, with $f_1^R = f_1(R)$, all $U$ satisfying
\begin{eqnarray*}
(A - Af_1^R - Cf_1^R)u_1^2 &\,-\,& (B + 2Cf_1^R)u_1 u_2 \;\,-\;\, (B+C)f_1^Ru_2^2 \nonumber \\
&\,+\,& 2Cf_1^Ru_1 \;\,+\;\, (2Cf_1^R + Bm)u_2 \;\,-\;\, Cf_1^R \;\,=\;\, 0,
\end{eqnarray*}
which is a hyperbola, see also \cite{Aze14}. Of course, for the parametrization of the hyperbola in Fig.~\ref{fig:triangle}, the $u_2$ coordinate seems to be a good choice, the $u_1$ coordinate have a detour. (The third coordinate $u_3$ is also a good choice.)

The base curve $\Gamma(\ell) = (\gamma_1(\ell),\,\ell)$ is parametrized with
$$\gamma_1(\ell) = \frac{-b + \sqrt{b^2 - 4ac}}{2a},$$ 
where $a = A - (A + C)f_1^R$, $b = 2Cf_1^R - (B + 2Cf_1^R)\ell$ and $c = -(B + C)f_1^R\ell^2 + (2Cf_1^R + Bm)\ell -Cf_1^R$ hold. Thus, from the flux function (\ref{eq:Corey}.b), the EFF is
$$\f(\ell) \;=\; f_2(\Gamma(\ell)) \;=\; \frac{B\ell^2}{A\gamma_1^2(\ell) + B\ell^2 + C(1 - \ell - \gamma_1(\ell))^2},$$
at the value $\ell^*$ for the Welge point it is actually satisfied $\lambda_s(\Gamma(\ell^*)) = f_2(\Gamma(\ell^*))/\ell^*$, as proven in \cite{CF16}. (The related Bethe-Wendroff point $S^* = \Gamma(\ell^*)$ belongs to the extension boundary.) From $S^*$ the slow-family rarefaction follows until the third (implicit) coordinate vanish at $J$, see Fig.~\ref{fig:triangle}.

\section{Conclusions}
This work contributes to the applicability of EFFs for systems of conservation laws. The remarkable feature resides on restricting each wave group of a Riemann problem, into a problem with a single scalar equation. Of course, via the Wave Curve Method, the system determines the base curve which supports the EFF, therefore the Riemann problem is satisfied both for the system and for the restricted scalar conservation law.
This contribution is a first step, this engine will be important in proving conjectures we have as well as new emerging results.

\section*{Acknowledgments}
\noindent
{I'm very grateful to Prof.~Dan Marchesin (IMPA) for his friendship and, the many enlightening discussions, comments and suggestions for this work.
This work was partially supported by grants CNPq 402299/2012-4, 170135/2016-0 and FAPERJ E-26/210.738/2014. I gratefully acknowledge the financial support by Asociaci\'on Mexicana de Cultura A.C. and by the Hyp2016 conference.}


\begin{thebibliography}{00}
\bibitem{Aze10}
{\sc A.V.~Azevedo, A.~de Souza, F.~Furtado, D.~Marchesin and B.~Plohr} (2010) 
{``The solution by the wave curve method of three-phase flow in virgin reservoirs''},
{\it Transp. Porous Media} {\bf 83}: 99--125.

\bibitem{Aze14}
{\sc A.V.~Azevedo, A.~de Souza, F.~Furtado and D.~Marchesin} (2014) 
{``Uniqueness of the Riemann solution for three-phase flow in a porous medium''},
{\it SIAM J. Appl. Math.} {\bf 74}: 1967--1997.

\bibitem{BC16}
{\sc S.~Berres and P.~Casta\~neda} (2016)
``Identification of shock profile solutions for bidisperse suspensions'',
{\it Bull. Braz. Math. Soc., New Series} {\bf 47}: 105--115.

\bibitem{BC--}
{\sc S.~Berres and P.~Casta\~neda} (--)
``Contact manifolds in a nonstrictly hyperbolic system of two nonlinear conservation laws''.
({\it Submitted}.)

\bibitem{BL42}
{\sc S.E.~Buckley and M.C.~Leverett} (1942)
``Mechanism of fluid displacements in sands'',
{\it Trans. AIME} {\bf 146}: 107--116.

\bibitem{Hans}
{\sc J.~Bruining} (2007) 
{\it Multiphase Flow in Porous Media}. 
TU-Delft, Lecture notes. (94 pages.)

\bibitem{Cas16}
{\sc P.~Casta\~neda} (2016) 
{``Ole\u{\i}nik a trav\'es del espejo''},
{\it Miscel\'anea Mat.} {\bf 62}: 63--79.

\bibitem{CAFM16}
{\sc P.~Casta\~neda, E.~Abreu, F.~Furtado and D.~Marchesin} (2016) 
{``On a universal structure for immiscible three-phase flow in virgin reservoirs''},
{\it Comput. Geosci.} {\bf 20}: 171--185.

\bibitem{CF16}
{\sc P.~Casta\~neda and F.~Furtado} (2016)
{``The role of sonic shocks between two- and three-phase states in porous media''}, 
{\it Bull. Braz. Math. Soc., New Series} {\bf 47}: 227--240.

\bibitem{preprint}
{\sc P.~Casta\~neda, F.~Furtado and D.~Marchesin} (2013) 
{``The convex permeability three-phase flow in reservoirs''},
{\it IMPA Preprint S\'erie E} -- {\bf 2258}: 1--34.

\bibitem{Daf00}  
{\sc C.M.~Dafermos} (2000) 
{\it Hyperbolic Conservation Laws in Continuum Physics.}
{Berlin, Springer Verlag}.

\bibitem{Fur89}
{\sc F.~Furtado } (1989)
{\it Structural Stability of Nonlinear Waves for Conservation Laws}, PhD Thesis, NYU.

\bibitem{GetA15}
{\sc M.R.~Gonzalez, J.~Gonz\'alez-Estefan, H.~Lara-Garc\a'ia, P.~S\'anchez-Camacho, E.I.~Basaldella, H.~Pfeifferb and I.A.~Ibarra} (2015)
{``Separation of CO$_2$ from CH$_4$ and CO$_2$ capture in the presence of water vapour in NOTT-400''},
{\it New J. Chem.} {\bf 39}: 2400-2403.

\bibitem{Lax57}
{\sc P.~Lax} (1957)
{``Hyperbolic systems of conservation laws {II}''},
{\it Commun. Pure Appl. Math.} {\bf 10}: 537--566.

\bibitem{Liu74}
{\sc T.-P.~Liu} (1974)
{``The Riemann problem for general 2$\times$2 conservation laws''},
{\it Trans. Amer. Math. Soc.} {\bf 199}: 89--112.

\bibitem{Liu75}
{\sc T.-P.~Liu} (1975)
{``The Riemann problem for general systems of conservation laws''},
{\it J. Differ. Equations} {\bf 18}: 218--234.

\bibitem{MCM12}
{\sc V.~Matos, P.~Casta\~neda and D.~Marchesin} (2012)
{``Classification of the umbilic point in immiscible three-phase flow in porous media''},
{\it Proceedings of HYP2012} {\bf }: 791--799.

\bibitem{Oleinik57}
{\sc O.A.~Ole\u{\i}nik} (1957)
{``Discontinuous solutions of non-linear differential equations''},
{\it Uspekhi Mat. Nauk} {\bf 12}: 3--73;
English trans:
{\it Amer. Math. Soc. Transl.} (1963) {\bf 26(2)}: 95--172.

\bibitem{RM12}
{\sc P.~Rodr\a'iguez-Berm\'udez and D.~Marchesin} (2012) 
{``Riemann solutions for vertical flow of three phases in porous media: simple cases''},
{\it J.~Hyperbolic Diff. Eqs.} {\bf 10}: 335--370.

\bibitem{RAA01}
{\sc H.-K.~Rhee, R.~Arris and N.R.~Amundson} (2001)
{\it First-Order Partial Differential Equations, Volume 1.}
Dover Publications.

\bibitem{Schecter}
{\sc S.~Schecter, D.~Marchesin and B.~Plohr} (1996)
{``Structurally Stable Riemann Solutions''},
{\it J.~Differ. Equations} {\bf 126}: 303--354.

\bibitem{SM14}
{\sc J.D.~Silva and D.~Marchesin} (2014) 
{``Riemann solutions without an intermediate constant state for a system of two conservation laws''},
{\it J.~Differ. Equations} {\bf 256}: 1295--1316.
\end{thebibliography}
\end{document}